\newtheorem{theorem}{Theorem}[section]
\newtheorem{lemma}[theorem]{Lemma}
\newtheorem{definition}[theorem]{Definition}
\newtheorem{example}[theorem]{Example}
\newtheorem{proposition}[theorem]{Proposition}
\newtheorem{remark}[theorem]{Remark}
\numberwithin{equation}{section}
\def\R{{\mathbb{R}}}
\def\N{{\mathbb{N}}}
\def\x{{\mathbf{x}}}
\def\a{{\boldsymbol{\alpha}}}
\def\b{{\boldsymbol{\beta}}}
\def\bv{{\boldsymbol{v}}}
\def\bu{{\boldsymbol{u}}}
\def\A{{\mathscr{A}}}
\def\supp{\hbox{\rm{supp}}}
\def\SONC{\hbox{\rm{SONC}}}
\def\int{\hbox{\rm{int}}}
\def\New{\hbox{\rm{New}}}
\def\Conv{\hbox{\rm{conv}}}
\begin{document}

\title{On Supports of Sums of Nonnegative Circuit Polynomials}
\thanks{This work was supported partly by NSFC under grants 61732001 and 61532019.}
\author{Jie Wang}
\address{Jie Wang\\ School of Mathematical Sciences, Peking University}
\email{wangjie212@pku.edu.cn}
\subjclass[2010]{Primary, 14P10,12Y05; Secondary, 11C08,90C25}
\keywords{nonnegative polynomial, circuit polynomial, SONC, sum of squares, polynomial optimization}
\date{\today}

\begin{abstract}
In this paper, we prove that every SONC polynomial decomposes into a sum of nonnegative circuit polynomials with the same support, which reveals the advantage of SONC decompositions for certifying nonnegativity of sparse polynomials compared with the classical SOS decompositions. By virtue of this fact, we can decide $f\in\SONC$ through relative entropy programming more efficiently.
\end{abstract}

\maketitle
\bibliographystyle{amsplain}

\section{Introduction}
A real polynomial $f\in\R[\x]=\R[x_1,\ldots,x_n]$ is called a {\em nonnegative polynomial} if its evaluation on every real point is nonnegative. Certifying the nonnegativity of a polynomial $f$ is a central problem of real algebraic geometry and also has important applications to polynomial optimization problems. The classical method for this problem is writing $f$ as a sum of squares of polynomials (SOS), and then $f$ is obviously nonnegative. The key idea of this method is representing $f$ as a sum of a certain class of nonnegative polynomials whose nonnegativity is easy to check.

Recently in \cite{iw}, Iliman and Wolff introduced the concept of sums of nonnegative circuit polynomials as a substitute of sums of squares of polynomials to represent nonnegative polynomials. A polynomial $f$ is called a {\em circuit polynomial} if it is of the form
\begin{equation}
f(\x)=\sum_{i=1}^mc_i \x^{\a_i}-d\x^{\b},
\end{equation}
where the Newton polytope $\Delta=\New(f)$ is a lattice simplex with the vertex set $\{\a_1,\ldots,\a_m\}$, $\b$ an interior point of $\Delta$ and $c_i>0$ for $i=1,\ldots,m$. For every circuit polynomial $f$, we associate it with the {\em circuit number} defined as $\Theta_f:=\prod_{i=1}^m(c_i/\lambda_i)^{\lambda_i}$, where the $\lambda_i$'s are uniquely given by the convex combination $\b=\sum_{i=1}^m\lambda_i\a_i \textrm{ with } \lambda_i>0 \textrm{ and } \sum_{i=1}^m\lambda_i=1$. The nonnegativity of circuit polynomials is easy to check. Actually the circuit polynomial $f$ is nonnegative if and only if $\a_i\in(2\N)^n$ for all $i$, and $-\Theta_f\le d\le\Theta_f$ if $\b\notin(2\N)^n$ or $d\le\Theta_f$ if $\b\in(2\N)^n$.

If a polynomial $f$ can be written as a sum of nonnegative circuit polynomials (SONC), then $f$ is obviously nonnegative. Based on these SONC decompositions for nonnegativity certificate, new approaches were proposed for both unconstrained polynomial optimization problems and constrained polynomial optimization problems, which were proved to be significantly more efficient than the classic semidefinite programming method in many cases (\cite{dlw,diw,dkw,lw,se}).

In my previous paper \cite{wang}, it was proved that certain kinds of nonnegative polynomials decompose into a sum of nonnegative circuit polynomials with the same support. In this paper, we clarify an important fact that every SONC polynomial decomposes into a sum of nonnegative circuit polynomials with the same support. In other words, SONC decompositions for nonnegative polynomials exactly maintain sparsity of polynomials. It is dramatically unlike the case of SOS decompositions for nonnegative polynomials, in which case many extra support monomials are needed in general. This reveals the advantage of SONC decompositions for certifying nonnegativity of sparse polynomials compared with the classical SOS decompositions. By virtue of this fact, we can decide $f\in\SONC$ via relative entropy programming more efficiently.

\section{Preliminaries}
\subsection{Nonnegative Polynomials}
Let $\R[\x]=\R[x_1,\ldots,x_n]$ be the ring of real $n$-variate polynomial, and $\N^*=\N\backslash\{0\}$. For a finite set $\A\subset\N^n$, we denote by $\Conv(\A)$ the convex hull of $\A$, and by $V(\A)$ the vertices of the convex hull of $\A$. Also we denote by $V(P)$ the vertex set of a polytope $P$. We consider polynomials $f\in\R[\x]$ supported on $\A\subset\N^n$, i.e. $f$ is of the form $f(\x)=\sum_{\a\in \A}c_{\a}\x^{\a}$ with $c_{\a}\in\R, \x^{\a}=x_1^{\alpha_1}\cdots x_n^{\alpha_n}$. The support of $f$ is $\supp(f):=\{\a\in \A\mid c_{\a}\ne0\}$ and the Newton polytope is defined as $\New(f)=\Conv(\supp(f))$. For a polytope $P$, we use $P^{\circ}$ to denote the interior of $P$.

A polynomial $f\in\R[\x]$ which is nonnegative over $\R^n$ is called a {\em nonnegative polynomial}. A nonnegative polynomial must satisfy the following necessary conditions.
\begin{proposition}(\cite[Theorem 3.6]{re})\label{nc-prop2}
Let $\A\subset\N^n$ and $f=\sum_{\a\in \A}c_{\a}\x^{\a}\in\R[\x]$ with $\supp(f)=\A$. Then $f$ is nonnegative only if the following hold:
\begin{enumerate}
  \item $V(\A)\subset(2\N)^n$;
  \item If $\a\in V(\A)$, then the corresponding coefficient $c_{\a}$ is positive.
\end{enumerate}
\end{proposition}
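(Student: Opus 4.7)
The plan is to exploit the defining property of a vertex: each $\a\in V(\A)$ is the \emph{unique} maximizer over $\Conv(\A)$ of some linear functional $w\in\R^n$. Using this $w$, I would parametrize a one-parameter family of points in $\R^n$ along which the monomial $\x^{\a}$ strictly dominates every other monomial $\x^{\b}$ appearing in $f$. Nonnegativity of $f$ along this family then forces sign and parity conditions on the leading coefficient.

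Fix $\a\in V(\A)$ and pick $w\in\R^n$ with $\langle w,\a\rangle>\langle w,\b\rangle$ for every $\b\in\A\setminus\{\a\}$. For sign choices $(\epsilon_1,\ldots,\epsilon_n)\in\{-1,+1\}^n$, set
\begin{equation*}
\x(t):=(\epsilon_1 t^{w_1},\ldots,\epsilon_n t^{w_n}),\qquad t>0.
\end{equation*}
Writing $\epsilon^{\b}:=\prod_i\epsilon_i^{\beta_i}$, one has $\x(t)^{\b}=\epsilon^{\b}t^{\langle w,\b\rangle}$, so
\begin{equation*}
f(\x(t))=t^{\langle w,\a\rangle}\Bigl(c_{\a}\epsilon^{\a}+\sum_{\b\ne\a}c_{\b}\epsilon^{\b}\,t^{\langle w,\b-\a\rangle}\Bigr).
\end{equation*}
All exponents $\langle w,\b-\a\rangle$ are strictly negative, so the bracketed expression tends to $c_{\a}\epsilon^{\a}$ as $t\to\infty$. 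Hence nonnegativity of $f$ on these curves forces $c_{\a}\epsilon^{\a}\ge 0$ for every sign vector $(\epsilon_1,\ldots,\epsilon_n)\in\{-1,+1\}^n$.

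Both claims now follow by varying signs. If some coordinate $\alpha_i$ were odd, flipping $\epsilon_i$ would negate $\epsilon^{\a}$; applying the inequality to both sign vectors would yield $c_{\a}=0$, contradicting $\a\in\supp(f)=\A$. Hence $\a\in(2\N)^n$, giving (1), and then $\epsilon^{\a}=1$ identically so $c_{\a}\ge 0$; since $c_{\a}\ne 0$, in fact $c_{\a}>0$, giving (2).

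The only non-routine step is the existence of a strictly separating $w$, which is immediate from the definition of a polytope vertex (take any supporting hyperplane whose contact with $\Conv(\A)$ is the single point $\a$). Beyond that the argument is a direct asymptotic analysis and presents no real obstacle; it is the classical ``push to infinity'' technique, which is why this result goes back to Reznick.
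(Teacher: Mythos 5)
Your argument is correct. The paper itself does not give a proof here; it cites the result directly from Reznick's paper \cite{re}, Theorem 3.6. Your ``push to infinity'' argument---choosing a strictly exposing linear functional $w$ for the vertex $\a$, substituting $x_i=\epsilon_i t^{w_i}$, and comparing asymptotic behavior as $t\to\infty$ over all sign vectors $\epsilon\in\{-1,+1\}^n$---is precisely the classical technique, and it is complete as you have written it: the existence of $w$ with $\langle w,\a\rangle>\langle w,\b\rangle$ for every $\b\in\A\setminus\{\a\}$ follows from the fact that a vertex is an exposed face of the polytope, the sign bookkeeping $\x(t)^{\b}=\epsilon^{\b}t^{\langle w,\b\rangle}$ is right, and the two-sided sign argument correctly yields both the evenness of $\a$ and the positivity of $c_{\a}$. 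So there is nothing to compare against in the paper; this is a faithful reconstruction of the standard proof the citation refers to.
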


For $f\in\R[\x]$, let $\Lambda(f):=\{\a\in\supp(f)\mid\a\in(2\N)^n\textrm{ and }c_{\a}>0\}$ and $\Gamma(f):=\supp(f)\backslash\Lambda(f)$. Then we can write $f=\sum_{\a\in\Lambda(f)}c_{\a}\x^{\a}-\sum_{\b\in\Gamma(f)}d_{\b}\x^{\b}$ with $c_{\a}>0$. The necessary conditions in Proposition \ref{nc-prop2} restate as $V(\New(f))\subseteq\Lambda(f)$.

\subsection{Sums of Nonnegative Circuit Polynomials}
A subset $\A\subseteq(2\N)^n$ is called a {\em trellis} if $\A$ comprises the vertices of a simplex.
\begin{definition}
Let $\A$ be a trellis and $f\in\R[\x]$. Then $f$ is called a {\em circuit polynomial} if it is of the form
\begin{equation}\label{nc-eq}
f(\x)=\sum_{\a\in\A}c_{\a}\x^{\a}-d\x^{\b},
\end{equation}
with $c_{\a}>0$ and $\b\in\Conv(\A)^{\circ}$. Assume
\begin{equation}
\b=\sum_{\a\in\A}\lambda_{\a}\a\textrm{ with } \lambda_{\a}>0 \textrm{ and } \sum_{\a\in\A}\lambda_{\a}=1.
\end{equation}
For every circuit polynomial $f$, we define the corresponding {\em circuit number} as $\Theta_f:=\prod_{\a\in\A}(c_{\a}/\lambda_{\a})^{\lambda_{\a}}$.
\end{definition}

The nonnegativity of a circuit polynomial $f$ is decided by its circuit number alone.
\begin{theorem}(\cite[Theorem 3.8]{iw})\label{nc-thm1}
Let $f=\sum_{\a\in\A}c_{\a} \x^{\a}-d\x^{\b}\in\R[\x]$ be a circuit polynomial and $\Theta_f$ its circuit number. Then $f$ is nonnegative if and only if $\b\notin(2\N)^n$ and $|d|\le\Theta_f$, or $\b\in(2\N)^n$ and $d\le\Theta_f$.
\end{theorem}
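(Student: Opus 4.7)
The plan is to establish both directions via the weighted arithmetic--geometric mean (AM--GM) inequality, with careful attention to the parity of $\b$ and the signs of the coordinates of $\x$.

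\textbf{Sufficiency.} Since every $\a\in\A\subseteq(2\N)^n$, we have $\x^\a=|\x|^\a\ge0$ for all $\x\in\R^n$. I would apply weighted AM--GM to the nonnegative quantities $\{c_\a\x^\a/\lambda_\a\}_{\a\in\A}$ with weights $\{\lambda_\a\}$, giving
$$\sum_{\a\in\A}c_\a\x^\a=\sum_{\a\in\A}\lambda_\a\cdot\frac{c_\a\x^\a}{\lambda_\a}\ge\prod_{\a\in\A}\left(\frac{c_\a}{\lambda_\a}\right)^{\lambda_\a}\prod_{\a\in\A}(\x^\a)^{\lambda_\a}=\Theta_f\,|\x|^\b,$$
using $\sum_\a\lambda_\a\a=\b$ and $\sum_\a\lambda_\a=1$. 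If $\b\in(2\N)^n$, then $|\x|^\b=\x^\b\ge 0$, so the hypothesis $d\le\Theta_f$ yields $f(\x)\ge(\Theta_f-d)\x^\b\ge0$. If $\b\notin(2\N)^n$, then $|\x|^\b=|\x^\b|$, so $|d|\le\Theta_f$ gives $|d\x^\b|\le\Theta_f|\x|^\b$ and again $f(\x)\ge0$.

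\textbf{Necessity.} Suppose the stated conditions fail; I will construct an $\x^*\in\R^n$ with $f(\x^*)<0$ by showing the AM--GM bound is tight. Equality holds iff $c_\a|\x|^\a/\lambda_\a$ is constant across $\a\in\A$. Taking logarithms turns this into the linear system $\langle\log|\x|,\a-\a'\rangle=\log(\lambda_\a c_{\a'}/\lambda_{\a'}c_\a)$ for $\a,\a'\in\A$. Because $\A$ is the vertex set of a simplex, the differences $\{\a-\a'\}$ are linearly independent up to the obvious telescoping relations, so the system is consistent and admits a solution giving a point $\x^*\in\R^n_{>0}$ with $\sum_\a c_\a(\x^*)^\a=\Theta_f(\x^*)^\b$. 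In case $\b\in(2\N)^n$ and $d>\Theta_f$, this gives $f(\x^*)=(\Theta_f-d)(\x^*)^\b<0$. In case $\b\notin(2\N)^n$ and $|d|>\Theta_f$, some $\beta_i$ is odd, so I flip the sign of that coordinate of $\x^*$ (if needed) to make $d(\x^*)^\b>0$; since every $\a$ is even, each $(\x^*)^\a$ and hence $|\x^*|^\b$ are unchanged, and $f(\x^*)=(\Theta_f-|d|)|\x^*|^\b<0$.

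\textbf{Main obstacle.} The delicate point is existence of the positive equality point $\x^*$ when the simplex $\Conv(\A)$ is not full-dimensional in $\R^n$, i.e.\ when $|\A|<n+1$. Then the log-linear system underdetermines $\log|\x|$, but it is still solvable because the only relations among the $\a-\a'$ are the affine ones, which match exactly the corresponding multiplicative relations among the target constants $\lambda_\a c_{\a'}/\lambda_{\a'}c_\a$. Any particular solution on the column span can be extended to a full vector in $\R^n$, and exponentiating produces the desired $\x^*\in\R^n_{>0}$. Once this is in hand, the remaining sign-flipping argument is routine.
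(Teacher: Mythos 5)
Your proof is correct and follows essentially the same route as the cited source \cite[Theorem 3.8]{iw} (which the paper quotes without reproducing a proof): weighted AM--GM gives sufficiency, and for necessity one produces an explicit minimizer in $\R^n_{>0}$ by solving the log-linear equality system (consistent because $\A$ is affinely independent, so telescoping is the only relation, matching the cocycle structure of the right-hand sides) and then flips the sign of a coordinate at which $\b$ is odd. Your treatment of the lower-dimensional case is the right resolution of the only genuinely delicate point.
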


In analogy with writing a polynomial as sums of squares, writing a polynomial as a sum of nonnegative circuit polynomials is a certificate of nonnegativity. We denote by SONC both the class of polynomials which can be written as sums of nonnegative circuit polynomials and the property of a polynomial to be in this class. In \cite{wang}, it was proved that certain kinds of nonnegative polynomials decompose into a sum of nonnegative circuit polynomials with the same support.

Suppose $f=\sum_{\a\in\Lambda(f)}c_{\a}\x^{\a}-\sum_{\b\in\Gamma(f)}d_{\b}\x^{\b}\in\R[\x]$ with $\Gamma(f)\subset\New(f)^{\circ}$. For every $\b\in\Gamma(f)$, let
\begin{equation}
\Delta(\b):=\{\Delta\mid\Delta\textrm{ is a simplex, } \b\in\Delta^{\circ}, V(\Delta)\subseteq\Lambda(f)\}.
\end{equation}
If we can write $f=\sum_{\b\in\Gamma(f)}\sum_{\Delta\in\Delta(\b)}(\sum_{\a\in V(\Delta)}c_{\b\Delta\a}\x^{\a}-d_{\b\Delta}\x^{\b})$ such that every $\sum_{\a\in V(\Delta)}c_{\b\Delta\a}\x^{\a}-d_{\b\Delta}\x^{\b}$ is a nonnegative circuit polynomial, then we say that $f$ is a {\em sum of nonnegative circuit polynomials with the same support}.
\begin{theorem}(\cite[Theorem 3.9]{wang})\label{npgp-thm7}
Let $f=\sum_{\a\in\Lambda(f)}c_{\a}\x^{\a}-d\x^{\b}\in\R[\x]$ with $\b\in\New(f)^{\circ}$. If $f$ is nonnegative, then $f$ is a sum of nonnegative circuit polynomials with the same support.
\end{theorem}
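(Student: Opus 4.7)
The plan is to exploit the convex-analytic characterization of signomial nonnegativity. First I would observe that $f \geq 0$ on $\R^n$ forces
\[
|d| \;\leq\; \inf_{\x > 0} \sum_{\a \in \Lambda(f)} c_\a \x^{\a-\b}
\]
(with $|d|$ replaced by $d$ when $\b \in (2\N)^n$), reducing the problem to a signomial inequality on the positive orthant. Strong duality for signomial (geometric) programs, equivalent to the sharp weighted AM-GM inequality, identifies this infimum with
\[
\Theta^{*} \;:=\; \sup_{\lambda \in P} \prod_{\a \in \Lambda(f)} (c_\a/\lambda_\a)^{\lambda_\a}, \qquad P := \Big\{\lambda \in \R_{\geq 0}^{\Lambda(f)} : \sum_\a \lambda_\a = 1,\ \sum_\a \lambda_\a \a = \b \Big\},
\]
and the compactness of $P$ together with continuity of the objective guarantees that the supremum is attained at some $\lambda^{*} \in P$. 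In particular $\Theta^{*} \geq |d|$.

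The crucial geometric observation is that the vertices of $P$ are in natural bijection with $\Delta(\b)$: each vertex is supported on an affinely independent subset of $\Lambda(f)$, and such a set $V(\Delta)$ together with the unique barycentric representation of $\b$ relative to $\Delta$ determines the vertex. Decompose $\lambda^{*} = \sum_j \mu_j \lambda^{(j)}$ as a convex combination of vertices, with $\lambda^{(j)}$ associated to a simplex $\Delta_j \in \Delta(\b)$, and define allocations
\[
c_{\Delta_j, \a} \;:=\; \mu_j \,\frac{\lambda^{(j)}_\a}{\lambda^{*}_\a}\, c_\a \qquad (\a \in V(\Delta_j)),
\]
so that $\sum_j c_{\Delta_j, \a} = c_\a$ (using $\sum_j \mu_j \lambda^{(j)}_\a = \lambda^{*}_\a$). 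A short computation, using $\sum_\a \lambda^{(j)}_\a = 1$, gives the circuit number $\Theta_{\Delta_j} = \mu_j \prod_\a (c_\a/\lambda^{*}_\a)^{\lambda^{(j)}_\a}$ for the piece $f_{\Delta_j} := \sum_{\a \in V(\Delta_j)} c_{\Delta_j,\a} \x^\a - d_{\Delta_j} \x^\b$, and weighted AM-GM then yields
\[
\sum_j \Theta_{\Delta_j} \;=\; \sum_j \mu_j \prod_\a (c_\a/\lambda^{*}_\a)^{\lambda^{(j)}_\a} \;\geq\; \prod_\a (c_\a/\lambda^{*}_\a)^{\lambda^{*}_\a} \;=\; \Theta^{*} \;\geq\; |d|.
\]
Thus one can choose real numbers $d_{\Delta_j}$ with $|d_{\Delta_j}| \leq \Theta_{\Delta_j}$ and $\sum_j d_{\Delta_j} = d$; by Theorem~\ref{nc-thm1} each $f_{\Delta_j}$ is then a nonnegative circuit polynomial, and the $f_{\Delta_j}$'s sum to $f$.

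The main obstacle will be the $\a \in \Lambda(f)$ with $\lambda^{*}_\a = 0$: the allocation formula sends $c_{\Delta_j, \a} = 0$ for every $j$, yet $c_\a \x^\a$ must appear in the decomposition. I would address this with a preliminary lemma: every $\a \in \Lambda(f)$ is a vertex of some $\Delta \in \Delta(\b)$. Indeed, since $\b$ is an interior point of $\New(f)$, the ray from $\a$ through $\b$ exits $\New(f)$ at a point $\b'$ in the relative interior of some face $F$, whose vertices lie in $\Lambda(f)$ by Proposition~\ref{nc-prop2}; taking an affinely independent subset of those vertices which positively supports $\b'$ and appending $\a$ yields a simplex in $\Delta(\b)$ containing $\a$ (with $\a$ outside the affine hull of $F$ since $\b$ is interior). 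Given this lemma, one either perturbs $\lambda^{*}$ to a full-support point of $P$ and passes to a limit in the construction above, or directly distributes each missing $c_\a \x^\a$ into an additional piece supported on such a simplex, absorbing the extra circuit-number requirement via the positive slack $\sum_j \Theta_{\Delta_j} - |d|$.
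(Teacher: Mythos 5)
The paper itself does not prove this statement; it cites it directly from the author's earlier work [wang, Theorem 3.9], so a line-by-line comparison is not possible here. Evaluated on its own terms, your GP/REP-duality argument is sound: nonnegativity does bound $|d|$ by $\inf_{\x>0}\sum_{\a}c_{\a}\x^{\a-\b}$; strong duality for this unconstrained posynomial minimization holds (the primal infimum is attained because $\b\in\New(f)^{\circ}$ rules out any recession direction, and the stationarity condition $\sum_{\a}c_{\a}e^{\langle\a-\b,\y^*\rangle}(\a-\b)=0$ hands you a dual-feasible $\lambda$ achieving the value), so $\Theta^{*}\ge|d|$; and the allocation $c_{\Delta_j,\a}=\mu_j(\lambda^{(j)}_{\a}/\lambda^{*}_{\a})c_{\a}$ together with the Jensen step $\sum_j\Theta_{\Delta_j}\ge\Theta^{*}$ correctly produces a sum of nonnegative circuit polynomials supported on the $\Delta_j\in\Delta(\b)$ and $\b$ itself.

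The one place I would push back is your final paragraph: the ``main obstacle'' of coordinates with $\lambda^{*}_{\a}=0$ cannot arise under the hypothesis $\b\in\New(f)^{\circ}$, so the perturbation/extra-lemma machinery is unnecessary. Indeed $\New(f)=\Conv(\Lambda(f))$, and for each $\a_0\in\Lambda(f)$ the point $(\b-\epsilon\a_0)/(1-\epsilon)$ lies in the open set $\New(f)^{\circ}$ for small $\epsilon>0$, yielding some $\lambda\in P$ with $\lambda_{\a_0}\ge\epsilon$; averaging these over all $\a_0$ gives a fully positive point $\bar\lambda\in P$. The log-objective $\lambda\mapsto\sum_{\a}\lambda_{\a}\log(c_{\a}/\lambda_{\a})$ is concave and its partial derivative in $\lambda_{\a}$ tends to $+\infty$ as $\lambda_{\a}\to0^{+}$, so if $\lambda^{*}_{\a}=0$ for some $\a$, then moving along the segment from $\lambda^{*}$ toward $\bar\lambda$ would strictly increase the objective, contradicting optimality. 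Hence $\lambda^{*}_{\a}>0$ for every $\a\in\Lambda(f)$, every exponent is automatically picked up by the Carath\'eodory decomposition of $\lambda^{*}$ into vertices of $P$, and the argument closes cleanly without the preliminary lemma.
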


%\begin{theorem}(\cite[Theorem 4.1]{wang})\label{npmt-thm1}
%Let $f=\sum_{\a\in\Lambda(f)}c_{\a}\x^{\a}-\sum_{\b\in\Gamma(f)}d_{\b}\x^{\b}\in\R[\x]$ with $\Gamma(f)\subset\New(f)$. Assume that all $\b$'s lie in the same side of every hyperplane determined by points among $\Lambda(f)$ and there exists a vector $\bv=(v_k)\in(\R^*)^n$ such that $d_{\b}\bv^{\b}>0$ for all $\b$. If $f$ is nonnegative, then $f$ is a sum of nonnegative circuit polynomials with no extra supports.
%\end{theorem}

\section{Nonnegative Circuit Polynomials and Sums of Squares of Binomials}
In this section, we give a connection between nonnegative circuit polynomials and sums of squares of binomials (SOSB).

We call a lattice point is {\em even} if it is in $(2\N)^n$. For a subset $M\subseteq\N^n$, define $\overline{A}(M):=\{\frac{1}{2}(\bu+\bv)\mid\bu\ne\bv,\bu,\bv\in M\cap(2\N)^n\}$ as the set of averages of distinct even points in $M$. For a trellis $\A$, we sat that $M$ is an {\em $\A$-mediated set} if $\A\subseteq M\subseteq\overline{A}(M)\cup\A$.
\begin{theorem}\label{sec3-thm1}
Let $f=\sum_{\a\in\A} c_{\a}\x^{\a}-d\x^{\b}\in\R[\x]$ be a nonnegative circuit polynomial with $\b\in\Conv(\A)^{\circ},d\ne0$. If $\b$ belongs to an $\A$-mediated set $M$, then $f$ is a sum of squares of the form $(a_{\bu}\x^{\bu}-b_{\bv}\x^{\bv})^2$, where $2\bu,2\bv\in M$.
\end{theorem}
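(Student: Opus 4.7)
My plan is a Reznick-style inductive argument that peels off one binomial square at a time, guided by the mediated-set structure. First I would reduce to the extremal case $d=\Theta_f$: after flipping signs $x_i\mapsto -x_i$ on coordinates where $\b_i$ is odd I may assume $d>0$, and in the remaining case $\b\in(2\N)^n$ with $d\le 0$ the term $-d\x^\b$ is itself a monomial square $(\sqrt{-d}\,\x^{\b/2})^2$ with $2(\b/2)=\b\in M$, so $f$ is already a sum of monomial squares. When $0<d<\Theta_f$, the proportional rescaling $c'_\a:=(d/\Theta_f)c_\a$ produces an extremal circuit polynomial $\sum_\a c'_\a\x^\a-d\x^\b$, while the residual $\sum_\a(c_\a-c'_\a)\x^\a$ splits into monomial squares $(\sqrt{c_\a-c'_\a}\,\x^{\a/2})^2$ with $\a\in\A\subseteq M$, already of the required form.

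In the extremal case I induct on the \emph{depth} of $\b$ in $M$, defined by $\mathrm{depth}(p)=0$ for $p\in\A$ and otherwise $1+\min\max(\mathrm{depth}(\bu),\mathrm{depth}(\bv))$ taken over representations $p=\tfrac12(\bu+\bv)$ with distinct $\bu,\bv\in M\cap(2\N)^n$ (such representations exist by the mediated property). Since $\b\in\Conv(\A)^\circ\setminus\A$, one has $\mathrm{depth}(\b)\ge 1$. In the base case $\mathrm{depth}(\b)=1$, $\b$ is the midpoint of two vertices in $\A$ and lies in the interior of $\Conv(\A)$, forcing $\A=\{\bu,\bv\}$ and $\lambda_\bu=\lambda_\bv=\tfrac12$; extremality then yields $d=2\sqrt{c_\bu c_\bv}$ and $f=(\sqrt{c_\bu}\,\x^{\bu/2}-\sqrt{c_\bv}\,\x^{\bv/2})^2$ directly.

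For the inductive step, pick a depth-realizing representation $\b=\tfrac12(\bu+\bv)$ and expand $\bu=\sum_\a\mu_\a\a$, $\bv=\sum_\a\nu_\a\a$ as convex combinations, so that $\lambda_\a=\tfrac12(\mu_\a+\nu_\a)$. The critical split is
\begin{equation*}
c_\a^{(1)}:=\frac{\mu_\a}{2\lambda_\a}c_\a,\qquad c_\a^{(2)}:=\frac{\nu_\a}{2\lambda_\a}c_\a,
\end{equation*}
producing $f_i:=\sum_\a c_\a^{(i)}\x^\a-\Theta_{f_i}\x^{\bu_i}$ (with $\bu_1=\bu$, $\bu_2=\bv$), each supported on the face $F_i\subseteq\Conv(\A)$ whose relative interior contains $\bu_i$. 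A short computation using $\mu_\a+\nu_\a=2\lambda_\a$ gives $\sqrt{\Theta_{f_1}\Theta_{f_2}}=\Theta_f/2=d/2$, so with $\alpha=\sqrt{\Theta_{f_1}}$ and $\beta=\sqrt{\Theta_{f_2}}$,
\begin{equation*}
f=(\alpha\x^{\bu/2}-\beta\x^{\bv/2})^2+f_1+f_2,
\end{equation*}
where $2\alpha\beta=d$ cancels the $-d\x^\b$ cross-term and $c_\a^{(1)}+c_\a^{(2)}=c_\a$ accounts for every vertex monomial. Each $f_i$ is an extremal nonnegative circuit polynomial on $F_i$ with interior point $\bu_i$ of strictly smaller depth in the restricted mediated set $M\cap F_i$, to which the inductive hypothesis applies.

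The main obstacle is calibrating the coefficient split so that it simultaneously preserves total mass, keeps both $f_i$ extremal, and enforces $2\sqrt{\Theta_{f_1}\Theta_{f_2}}=d$ in order to cancel the cross term; the particular formula above succeeds precisely because $\mu_\a+\nu_\a=2\lambda_\a$ forces the circuit-number products to factor cleanly as $\prod_\a(c_\a/(2\lambda_\a))^{\mu_\a+\nu_\a}=\prod_\a(c_\a/(2\lambda_\a))^{2\lambda_\a}$. A secondary verification is that $M\cap F_i$ is itself a $V(F_i)$-mediated set: whenever $m_1,m_2\in M\cap(2\N)^n$ have midpoint in $F_i$, the face property forces $m_1,m_2\in F_i$, so the mediated structure restricts cleanly to each face and the recursion terminates at the depth-$1$ base case above.
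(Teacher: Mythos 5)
The paper itself does not prove this statement; it defers to Theorem~5.2 of Iliman--de~Wolff, which in turn rests on Reznick's Theorem~4.4, and your reconstruction correctly identifies the overall shape of that argument (reduce to the extremal case $d=\Theta_f$, then peel off a binomial square using a representation $\b=\tfrac12(\bu+\bv)$ with $\bu,\bv\in M\cap(2\N)^n$, splitting the vertex coefficients via $c_\a^{(i)}$ so that $\sqrt{\Theta_{f_1}\Theta_{f_2}}=d/2$). Your algebra for the circuit-number factorisation is right, and the reduction to the extremal case and the base case $\A=\{\bu,\bv\}$ are both fine.

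The gap is in the well-definedness of your depth function, and it is not cosmetic: the recursion it encodes need not terminate. Consider the one-dimensional trellis $\A=\{0,6\}$ with $M=\{0,2,4,6\}$ and $\b=2$. One checks directly that $M$ is an $\A$-mediated set: $2=\tfrac12(0+4)$ and $4=\tfrac12(2+6)$, and these are the \emph{only} representations of $2$ and $4$ as averages of distinct even points of $M$. Hence $\mathrm{depth}(2)=1+\mathrm{depth}(4)$ and $\mathrm{depth}(4)=1+\mathrm{depth}(2)$, so the quantity you induct on is undefined. If you simply run the peeling recursion anyway (say on the extremal $f=1+x^6-\tfrac{3}{2^{2/3}}x^2$), the step at $\b=2$ produces one nonzero residual circuit polynomial with inner point $4$, the step at $4$ produces one with inner point $2$ (scaled down by a constant factor), and so on forever: you obtain a convergent \emph{infinite} series of binomial squares, not a finite SOSB decomposition. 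The theorem is nevertheless true for this instance --- e.g.\ $f=(1-2^{1/3}x^2)^2+(2^{-1/3}x-x^3)^2$, with all exponents doubling into $M$ --- so the statement is fine, but the descent you propose does not reach it. Closing this requires either Reznick's actual combinatorial handling of mediated sets or an additional closedness/Carath\'eodory argument to pass from the infinite series to a finite representation; neither appears in your write-up.

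A secondary point: when you restrict to the face $F_i$ you use that midpoints landing in a face force both endpoints into that face, which requires $m_1,m_2\in\Conv(\A)$. This needs the (true, but unproved in your argument) fact that every $\A$-mediated set is contained in $\Conv(\A)$; a generic-linear-functional argument supplies it, but the same argument shows that for a generic ordering every representation $p=\tfrac12(\bu+\bv)$ has one of $\bu,\bv$ strictly above $p$, which is exactly why the naive depth cannot be well-founded and flags that the obstruction above is structural rather than incidental.
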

\begin{proof}
For the proof, please refer to Theorem 5.2 in \cite{iw} which exploits Theorem 4.4 in \cite{re}.
\end{proof}

Inspired by Theorem \ref{sec3-thm1}, we are interested in the problem of deciding if there exists an $\A$-mediated set containing a given lattice point and computing one if there exists. However, there are no effective algorithms to do such thing as far as I know. On the other hand, for a trellis $\A$, there is a maximal $\A$-mediated set $\A^*$ satisfying $A(\A)\subseteq\A^*\subseteq\Conv(\A)\cup\N^n$ which contains every $\A$-mediated set. Following \cite{re}, a trellis $\A$ is called an {\em $H$-trellis} if $\A^*=\Conv(\A)\cup\N^n$. A sufficient condition for $H$-trellises is given in \cite{iw} which has the following useful corollary.
\begin{proposition}(\cite[Corollary 5.12]{iw})\label{sec3-prop}
Let $\A\subseteq\N^n$ be a trellis. Then $k\A$ is an $H$-trellis for $k\ge n$.
\end{proposition}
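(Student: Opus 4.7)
The plan is to apply the sufficient condition for $H$-trellises established in \cite{iw} --- of which this proposition is a direct corollary --- to the dilated trellis $k\A$. The underlying idea is that scaling preserves evenness of the vertices (since $k\cdot(2\N)\subseteq 2\N$ for every positive integer $k$), while enlarging the simplex $\Conv(k\A)=k\cdot\Conv(\A)$ so that it contains enough lattice points to support the iterated midpoint averaging that defines the maximal mediated set.

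First I would invoke the sufficient condition from \cite{iw}, whose typical form asserts that a trellis $\A$ is an $H$-trellis provided every lattice point in $\Conv(\A)^{\circ}\cap\N^n$ can be written as the midpoint of two distinct even lattice points in $\Conv(\A)\cap\N^n$, with the iteration of this decomposition terminating at the vertex set $\A$. Next, given a lattice point $\bb\in\Conv(k\A)\cap\N^n$ with $\bb\notin k\A$, I would construct a nonzero lattice displacement $\mathbf{w}\in\Z^n$ componentwise congruent to $\bb$ modulo $2$ such that both $\bu:=\bb+\mathbf{w}$ and $\bv:=\bb-\mathbf{w}$ lie in $\Conv(k\A)\cap(2\N)^n$, so that $\bb=\frac{1}{2}(\bu+\bv)$ realizes $\bb$ as the required midpoint. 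Finally I would verify that iterating this averaging descends along a well-founded measure (such as the minimum barycentric fractional part) and terminates at the vertices $k\A$, so every lattice point in $\Conv(k\A)\cap\N^n$ lies in $(k\A)^*$.

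The main obstacle is the parity-matching construction of $\mathbf{w}$. Writing $\bb=k\sum_{i=0}^n\mu_i\a_i$ in barycentric coordinates relative to the vertices of $k\A$, the dilation factor $k$ enlarges every barycentric distance by the same factor, so for $k$ sufficiently large one can always perturb $\bb$ by a parity-matching lattice vector while keeping both $\bb\pm\mathbf{w}$ inside $\Conv(k\A)\cap(2\N)^n$. The threshold $k\ge n$ emerges from a careful pigeonhole count exploiting the fact that the simplex has $n+1$ vertices and that each coordinate-parity constraint must be satisfied by a suitable choice of direction; intuitively, one routes $\mathbf{w}$ toward a vertex whose barycentric weight is bounded below so that the perturbed points remain in the simplex. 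Once this step is in place, the inductive reduction to $k\A$ is essentially bookkeeping on the decomposition, yielding the claim.
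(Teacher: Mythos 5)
The paper does not prove this proposition: it is imported verbatim from Iliman--de Wolff as their Corollary~5.12, with the proof explicitly deferred to that reference. So there is no in-paper argument to compare your attempt against, and your reconstruction should be judged on its own terms.

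On those terms there are two issues. First, a conceptual one: the mediated-set condition is static, not iterative. By the definition in the paper, $M$ is $\A$-mediated when $\A\subseteq M\subseteq \overline{A}(M)\cup\A$, i.e.\ every non-vertex point of $M$ is the midpoint of two distinct even points \emph{of $M$}. To show $k\A$ is an $H$-trellis it therefore suffices to show that $\Conv(k\A)\cap\N^n$ itself satisfies this condition (for every lattice point in $\Conv(k\A)\cap\N^n$ outside $k\A$, boundary points included, not only interior ones); that already forces $(k\A)^*=\Conv(k\A)\cap\N^n$. The last step of your plan --- ``iterating this averaging descends along a well-founded measure and terminates at the vertices'' --- is not part of the definition and is not needed. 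Second, and more seriously, the heart of the argument is left as an assertion: you say a parity-matching displacement $\mathbf{w}$ can be constructed and that ``the threshold $k\ge n$ emerges from a careful pigeonhole count,'' but the count is never performed and the construction of $\mathbf{w}$ is never given. For a point $\bb$ near a low-dimensional face, naively ``routing $\mathbf{w}$ toward a vertex'' can push $\bb\pm\mathbf{w}$ out of the simplex, so the bound $k\ge n$ is exactly where the real work lies; without that computation the sketch does not establish the proposition.
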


From Proposition \ref{sec3-prop} together with Theorem \ref{sec3-thm1}, we know that every $n$-variate nonnegative circuit polynomial supported on $k\A$ and a lattice point in the interior of $\Conv(k\A)$ is a sum of squares of binomials for $k\ge n$.
\begin{lemma}\label{sec4-lm1}
Suppose that $f(x_1,\ldots,x_n)$ is a sum of nonnegative circuit polynomials. Then $f(x_1^k,\ldots,x_n^k)$ is a sum of squares of binomials for $k\ge n$.
\end{lemma}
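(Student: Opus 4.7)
The plan is to reduce to the single-circuit case by linearity and then invoke the two main results of Section 3. Writing $f = \sum_j f_j$ with each $f_j$ a nonnegative circuit polynomial, and noting that the substitution $x_i \mapsto x_i^k$ commutes with summation, it suffices to prove the statement when $f$ itself is a single nonnegative circuit polynomial. The sum-of-squares-of-binomials representations of the summands then combine into one for the whole.

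So write $f(\x) = \sum_{\a \in \A} c_{\a} \x^{\a} - d \x^{\b}$ with trellis $\A$, inner point $\b \in \Conv(\A)^{\circ}$, and $c_{\a} > 0$. The substitution produces $f(x_1^k,\ldots,x_n^k) = \sum_{\a \in \A} c_{\a} \x^{k\a} - d \x^{k\b}$. My first step is to check that this is again a nonnegative circuit polynomial, now with trellis $k\A := \{k\a : \a \in \A\}$ and inner point $k\b$. Scaling by $k$ preserves convex-combination weights: if $\b = \sum_{\a} \lambda_{\a} \a$, then $k\b = \sum_{\a} \lambda_{\a}(k\a)$ with the \emph{same} $\lambda_{\a}$'s, so the circuit number is unchanged and the nonnegativity is inherited via Theorem \ref{nc-thm1}. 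Also $k\A \subseteq (2\N)^n$ because $\A \subseteq (2\N)^n$.

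My second step is to apply Proposition \ref{sec3-prop}: since $k \ge n$, the trellis $k\A$ is an $H$-trellis, so its maximal mediated set satisfies $(k\A)^* = \Conv(k\A) \cap \N^n$. As $\b \in \N^n$ lies in $\Conv(\A)$, the point $k\b$ lies in $\Conv(k\A) \cap \N^n = (k\A)^*$. Theorem \ref{sec3-thm1} then applies to $f(x_1^k,\ldots,x_n^k)$ with $M = (k\A)^*$ and yields the required representation as a sum of squares of the binomials $(a_{\bu}\x^{\bu}-b_{\bv}\x^{\bv})^2$. The degenerate subcase $d = 0$ is immediate: each term $c_{\a}\x^{k\a}$ with $\a \in (2\N)^n$ is the square of the monomial $\sqrt{c_{\a}}\,\x^{k\a/2}$.

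I do not anticipate a serious obstacle; the only real work is the bookkeeping that checks the trellis, nonnegativity, and mediated-set properties are all compatible with the rescaling $\x \mapsto \x^k$, after which Proposition \ref{sec3-prop} and Theorem \ref{sec3-thm1} supply the conclusion directly.
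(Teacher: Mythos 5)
Your proof is correct and takes essentially the same approach as the paper: decompose $f$ into nonnegative circuit summands, note that the substitution $x_i\mapsto x_i^k$ scales each circuit's trellis $\A$ to an $H$-trellis $k\A$ (Proposition \ref{sec3-prop}), and conclude via Theorem \ref{sec3-thm1} applied with $M=(k\A)^*$. The paper's proof is a one-liner relying on the remark immediately preceding the lemma; you simply spell out the same bookkeeping in detail, including the routine checks that scaling preserves the trellis, the barycentric coordinates, and hence the circuit number and nonnegativity, plus the trivial $d=0$ subcase.
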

\begin{proof}
Assume $f=\sum f_i$, where $f_i$'s are nonnegative circuit polynomials. For $k\ge n$, since every $f_i(x_1^k,\ldots,x_n^k)$ is a sum of squares of binomials, so is $f(x_1^k,\ldots,x_n^k)$.
\end{proof}

\section{Supports of Sums of Nonnegative Circuit Polynomials}
In this section, we prove the main result of this paper: every SONC polynomial decomposes into a sum of nonnegative circuit polynomials with the same support. The proof will take use of the SOSB decompositions for SONC polynomials, so we apply the map $x_i\mapsto x_i^k$ to $f$.
\begin{lemma}\label{sec4-lm}
Let $f(x_1,\ldots,x_n)\in\R[\x]$. Then $f(x_1,\ldots,x_n)$ is a sum of nonnegative circuit polynomials with the same support if and only if $f(x_1^k,\ldots,x_n^k)$ is a sum of nonnegative circuit polynomials with the same support for an odd number $k$.
\end{lemma}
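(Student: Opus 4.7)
The plan is to translate a SONC-with-same-support decomposition of $f$ directly to one of $f(x_1^k,\ldots,x_n^k)$, and vice versa, via the substitution $x_i\mapsto x_i^k$. Every step rests on two elementary observations: this substitution multiplies every exponent vector by $k$, and, crucially when $k$ is odd, it preserves parity, i.e., $\a\in(2\N)^n$ iff $k\a\in(2\N)^n$.

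For the ``only if'' direction, assume $f=\sum_{\b\in\Gamma(f)}\sum_{\Delta\in\Delta(\b)}\bigl(\sum_{\a\in V(\Delta)}c_{\b\Delta\a}\x^{\a}-d_{\b\Delta}\x^{\b}\bigr)$ with each summand a nonnegative circuit polynomial. I would apply $x_i\mapsto x_i^k$ termwise. The summand becomes $\sum_{\a\in V(\Delta)}c_{\b\Delta\a}\x^{k\a}-d_{\b\Delta}\x^{k\b}$, whose vertex set $k\cdot V(\Delta)$ is again the vertex set of a simplex, whose inner exponent $k\b$ lies in its interior, and whose nonnegativity is automatic because $x_i\mapsto x_i^k$ sends $\R$ into $\R$. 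The support is exactly $k\supp(f)=\supp(f(x_1^k,\ldots,x_n^k))$, giving a SONC-with-same-support decomposition of $f(x_1^k,\ldots,x_n^k)$.

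For the ``if'' direction, set $g(\x):=f(x_1^k,\ldots,x_n^k)$ and note $\supp(g)=k\supp(f)$; since $k$ is odd, also $\Lambda(g)=k\Lambda(f)$ and $\Gamma(g)=k\Gamma(f)$. Any SONC-with-same-support decomposition of $g$ consists of nonnegative circuit polynomials $p$ whose inner term is $k\b$ for some $\b\in\Gamma(f)$ and whose vertices are $\{k\a_1,\ldots,k\a_m\}$ with $\a_j\in\Lambda(f)$. I would then ``downscale'' each such $p=\sum_j c_j\x^{k\a_j}-d\,\x^{k\b}$ to the polynomial
$$q(\y)=\sum_{j=1}^m c_j\y^{\a_j}-d\,\y^{\b}$$
by formally replacing $x_i^k$ with a new indeterminate $y_i$ (this is well-defined because every exponent of $p$ is divisible by $k$). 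Then I would check that $q$ is a nonnegative circuit polynomial: its vertex set $\{\a_1,\ldots,\a_m\}$ is the vertex set of a simplex (a linear rescaling of the one for $p$), $\b$ lies in its interior, the coefficients $c_j$ are positive, the barycentric coordinates of $\b$ relative to $\{\a_j\}$ coincide with those of $k\b$ relative to $\{k\a_j\}$, so $\Theta_q=\Theta_p$. Summing the $q$'s recovers $f$ with the required support structure.

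The main (essentially only) obstacle is verifying that the downscaled $q$ still satisfies the nonnegativity criterion of Theorem~\ref{nc-thm1}. Because the circuit number is unchanged, this reduces to checking that the dichotomy ``$\b\in(2\N)^n$'' versus ``$\b\notin(2\N)^n$'' matches the one for $k\b$; this is exactly where the hypothesis that $k$ is odd is needed and where the statement would fail for even $k$.
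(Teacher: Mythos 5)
Your proposal is correct and takes the same approach as the paper: the paper's proof is a one-line appeal to the fact that the substitution $x_i\mapsto x_i^k$ (for odd $k$) preserves the property of being a nonnegative circuit polynomial and scales all exponents uniformly by $k$, and you have simply filled in the details (parity preservation, invariance of barycentric coordinates and circuit number, and the downscaling being well-defined because the same-support condition forces all exponents to lie in $k\,\supp(f)$).
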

\begin{proof}
It is immediate from the fact that a polynomial $g(x_1,\ldots,x_n)$ is a nonnegative circuit polynomial if and only if $g(x_1^k,\ldots,x_n^k)$ is a nonnegative circuit polynomial for an odd number $k$.
\end{proof}

If a nonnegative polynomial $g$ has at most one negative term, i.e. $g$ has the form $\sum_{\a\in\Lambda(g)}c_{\a}\x^{\a}-d\x^{\b}$, where $\b\in(2\N)^n,d>0$ or $\b\notin(2\N)^n$, then we call $g$ a {\em banana polynomial}. By Theorem \ref{npgp-thm7}, a banana polynomial is a sum of nonnegative circuit polynomials with the same support. Moreover, if a polynomial $f$ can be written as a sum of banana polynomials, then $f\in\SONC$. For a nonnegative polynomial $f$, if we can write $f=\sum_{\b\in\Gamma(f)}(\sum_{\a\in\Lambda(f)}c_{\b\a}\x^{\a}-d_{\b}\x^{\b})$ such that every $\sum_{\a\in\Lambda(f)}c_{\b\a}\x^{\a}-d_{\b}\x^{\b}$ is a banana polynomial, then we say that $f$ is a {\em sum of banana polynomials with the same support}.
\begin{theorem}\label{sec3-thm2}
Let $f=\sum_{\a\in\Lambda(f)}c_{\a}\x^{\a}-\sum_{\b\in\Gamma(f)}d_{\b}\x^{\b}\in\R[\x]$. If $f\in\SONC$, then $f$ is a sum of nonnegative circuit polynomials with the same support.
\end{theorem}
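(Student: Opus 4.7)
The plan is to reduce the statement to the setting of sums of squares of binomials via the substitution $x_i \mapsto x_i^k$ for an odd $k \geq n$, and then regroup binomial squares into banana polynomials to which Theorem~\ref{npgp-thm7} applies. By Lemma~\ref{sec4-lm} it suffices to prove that $\tilde f(\x) := f(x_1^k, \ldots, x_n^k)$ is a sum of nonnegative circuit polynomials with the same support. Since $f \in \SONC$, Lemma~\ref{sec4-lm1} supplies an SOSB representation $\tilde f = \sum_j (a_j \x^{\bu_j} - b_j \x^{\bv_j})^2$ with each $a_j, b_j > 0$.

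Each binomial square expands as $a_j^2 \x^{2\bu_j} + b_j^2 \x^{2\bv_j} - 2 a_j b_j \x^{\bu_j + \bv_j}$, contributing its only negative term at the midpoint $\bu_j + \bv_j$. The essential step is to partition the binomial squares according to these midpoints: for each $\b \in \Gamma(f)$ I aggregate those binomials with $\bu_j + \bv_j = k\b$ into one polynomial $P_\b$. Then $P_\b$ is a banana polynomial, i.e., a nonnegative polynomial with the single negative term $-d_\b \x^{k\b}$, so by Theorem~\ref{npgp-thm7} each $P_\b$ is itself a sum of nonnegative circuit polynomials with the same support as $P_\b$. Summing over $\b \in \Gamma(f)$ reconstructs $\tilde f$, and Lemma~\ref{sec4-lm} transports the resulting decomposition back to $f$.

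The hard part, and the main obstacle, is ensuring the SOSB decomposition can be chosen to respect the support of $\tilde f$, meaning that every binomial square has midpoint $\bu_j + \bv_j$ lying in $\{k\b : \b \in \Gamma(f)\}$ and positive exponents $2\bu_j, 2\bv_j$ lying in $\{k\a : \a \in \Lambda(f)\}$. A priori the general SOSB expansion supplied by Lemma~\ref{sec4-lm1} enjoys neither property. To secure both, I would start from an arbitrary SONC decomposition $f = \sum_i f_i$, apply Theorem~\ref{sec3-thm1} to each $f_i(\x^k)$ individually (using that $k\A_i$ is an $H$-trellis for $k \geq n$, so that all binomial squares associated with $f_i$ share the single midpoint $k\b_i$ where $\b_i$ is the inner exponent of $f_i$), and then perform a combinatorial cleanup: merging binomials whose cross-terms collectively cancel at exponents outside $k\Gamma(f)$, and reassigning those $f_i$ whose negative exponent $\b_i$ is absorbed by the positive parts of other $f_j$'s, so that the final partition sits entirely inside $\supp(\tilde f)$. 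Carrying out this cleanup while preserving nonnegativity of each group, and confirming that the aggregated positive exponents only land in $\{k\a : \a \in \Lambda(f)\}$, is the technical heart of the argument.
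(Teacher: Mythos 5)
Your high-level plan matches the paper's: substitute $x_i \mapsto x_i^k$ for odd $k \geq n$, invoke Lemma~\ref{sec4-lm1} for an SOSB representation of $\tilde f := f(x_1^k,\ldots,x_n^k)$, regroup binomial squares into banana polynomials, apply Theorem~\ref{npgp-thm7} to each banana, and transport back via Lemma~\ref{sec4-lm}. You also correctly identify the obstacle: the SOSB representation coming from Lemma~\ref{sec4-lm1} has no reason to have all cross-term exponents $\bu_j + \bv_j$ inside $k\,\Gamma(f)$ or all squared exponents $2\bu_j, 2\bv_j$ inside $k\,\Lambda(f)$; some cross-terms cancel among themselves at extraneous lattice points. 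But you then leave the ``combinatorial cleanup'' that handles these cancellations as a black box, conceding it is ``the technical heart of the argument.'' That concession is exactly right --- and it means the proposal has a genuine gap, since the cleanup is where essentially all of the work lives.

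The paper closes this gap by an induction on the number $m$ of binomial squares. Setting $h' = \sum_{i<m}(a_i\x^{\bu_i} - b_i\x^{\bv_i})^2$, the induction hypothesis provides a decomposition of $h'$ as a sum of banana polynomials supported exactly on $\supp(h')$. Adding back the final square $(a_m\x^{\bu_m} - b_m\x^{\bv_m})^2$ changes the support only at the three exponents $2\bu_m$, $2\bv_m$, $\bu_m+\bv_m$, so the repair is entirely local. The paper then runs a case analysis at each of the three points: if the exponent stays in $\Gamma(h)$, the corresponding banana polynomial is rescaled by a factor in $(0,1)$ and the remainder is redistributed; if it flips into $\Lambda(h)$, the banana polynomial is fully absorbed and a surplus positive monomial remains; if it vanishes, the banana polynomial is consumed exactly. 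Doing one square at a time is precisely what makes the cancellation bookkeeping tractable, and it is the mechanism your proposal lacks.

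One concrete claim in your cleanup sketch is also false: you assert that applying Theorem~\ref{sec3-thm1} to a single circuit polynomial $f_i(\x^k)$ produces binomial squares that ``share the single midpoint $k\b_i$.'' In Reznick's construction the binomial squares chain through a mediated set, so their midpoints $\bu+\bv$ visit many interior lattice points of the (scaled) simplex, with cross-terms at intermediate points cancelling against one another. If the midpoints were all $k\b_i$ the cleanup would be vacuous for a single circuit polynomial, which is not the case; so even the starting configuration of your cleanup is not what you describe.
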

\begin{proof}
By Lemma \ref{sec4-lm}, we only need to show that $f(x_1^{2n+1},\ldots,x_n^{2n+1})$ is a sum of nonnegative circuit polynomials with the same support. Since by Theorem \ref{npgp-thm7}, a banana polynomial is a sum of nonnegative circuit polynomials with the same support, we finish the proof by showing that $f(x_1^{2n+1},\ldots,x_n^{2n+1})$ is a sum of banana polynomials with the same support.

For simplicity, let $h=f(x_1^{2n+1},\ldots,x_n^{2n+1})$. By Theorem \ref{sec4-lm1}, we can assume $h=\sum_{i=1}^m(a_i\x^{\bu_i}-b_i\x^{\bv_i})^2$. Let us do induction on $m$. When $m=1$, $h=(a_1\x^{\bu_1}-b_1\x^{\bv_1})^2=a_1^2\x^{2\bu_1}+b_1^2\x^{2\bv_1}-2a_1b_1\x^{\bu_1+\bv_1}$ and the conclusion is obvious. Now assume that the conclusion is correct for $m-1$. Without loss of generality, assume $\bu_m+\bv_m\in\Gamma(h)$. Let $h'=\sum_{i=1}^{m-1}(a_i\x^{\bu_i}-b_i\x^{\bv_i})^2=\sum_{\a\in\Lambda(h')}c_{\a}'\x^{\a}-\sum_{\b\in\Gamma(h')}d_{\b}'\x^{\b}$. By the induction hypothesis, we can write $h'=\sum_{\b\in\Gamma(h')}(\sum_{\a\in\Lambda(h')}c_{\b\a}'\x^{\a}-d_{\b}'\x^{\b})$ as a sum of banana polynomials with the same support. Then
\begin{equation}\label{sec3-eq10}
h=\sum_{\b\in\Gamma(h')}(\sum_{\a\in\Lambda(h')}c_{\b\a}'\x^{\a}-d_{\b}'\x^{\b})+(a_m\x^{\bu_m}-b_m\x^{\bv_m})^2.
\end{equation}
From $h=h'+(a_m\x^{\bu_m}-b_m\x^{\bv_m})^2$, it follows that $\supp(h)$ and $\supp(h')$ differ among three elements: $2\bu_m,2\bv_m,\bu_m+\bv_m$. We obtain the expression of $h$ as a sum of banana polynomials with the same support from (\ref{sec3-eq10}) by adjusting the terms involving $2\bu_m,2\bv_m,\bu_m+\bv_m$ in (\ref{sec3-eq10}).

First let us consider the terms involving $2\bu_m$. If $2\bu_m\in\Gamma(h)$, then we must have $2\bu_m\in\Gamma(h')$ and $d_{2\bu_m}'>a_m^2$. By the equality $\sum_{\a\in\Lambda(h')}c_{2\bu_m\a}'\x^{\a}-d_{2\bu_m}'\x^{2\bu_m}+a_m^2\x^{2\bu_m}+b_m^2\x^{2\bv_m}-2a_mb_m\x^{\bu_m+\bv_m}=(1-\frac{a_m^2}{d_{2\bu_m}'})(\sum_{\a\in\Lambda(h')}c_{2\bu_m\a}'\x^{\a}
-d_{2\bu_m}'\x^{2\bu_m})+\sum_{\a\in\Lambda(h')}\frac{c_{2\bu_m\a}'a_m^2}{d_{2\bu_m}'}\x^{\a}+b_m^2\x^{2\bv_m}-2a_mb_m\x^{\bu_m+\bv_m}$, we obtain the expression of $h$ as $h=\sum_{\b\in\Gamma(h')\backslash\{2\bu_m\}}(\sum_{\a\in\Lambda(h')}c_{\b\a}'\x^{\a}-d_{\b}'\x^{\b})+(1-\frac{a_m^2}{d_{2\bu_m}'})(\sum_{\a\in\Lambda(h')}c_{2\bu_m\a}'\x^{\a}
-d_{2\bu_m}'\x^{2\bu_m})+\sum_{\a\in\Lambda(h')}\frac{c_{2\bu_m\a}'a_m^2}{d_{2\bu_m}'}\x^{\a}+b_m^2\x^{2\bv_m}-2a_mb_m\x^{\bu_m+\bv_m}$. If $2\bu_m\in\Lambda(h)$ and $2\bu_m\in\Gamma(h')$, then we must have $a_m^2>d_{2\bu_m}'$ and we can write $h$ as $h=\sum_{\b\in\Gamma(h')\backslash\{2\bu_m\}}(\sum_{\a\in\Lambda(h')}c_{\b\a}'\x^{\a}-d_{\b}'\x^{\b})+\sum_{\a\in\Lambda(h')}c_{2\bu_m\a}'\x^{\a}
+(a_m^2-d_{2\bu_m}')\x^{2\bu_m}+b_m^2\x^{2\bv_m}-2a_mb_m\x^{\bu_m+\bv_m}$. If $2\bu_m\notin\supp(h)$, then $2\bu_m\in\Gamma(h')$ and the terms $-d_{2\bu_m}'\x^{2\bu_m}$ and $a_m^2\x^{2\bu_m}$ must be cancelled. Hence we obtain the expression of $h$ as $h=\sum_{\b\in\Gamma(h')\backslash\{2\bu_m\}}(\sum_{\a\in\Lambda(h')}c_{\b\a}'\x^{\a}-d_{\b}'\x^{\b})+\sum_{\a\in\Lambda(h')}c_{2\bu_m\a}'\x^{\a}
+b_m^2\x^{2\bv_m}-2a_mb_m\x^{\bu_m+\bv_m}$.

Continue adjusting the terms of the expression of $h$ in a similar way for $2\bv_m$ and $\bu_m+\bv_m$. Eventually we can write $h$ as a sum of banana polynomials with the same support as desired.
\end{proof}

\begin{remark}
Theorem \ref{sec3-thm2} essentially says that the SONC decompositions for nonnegative polynomials exactly maintain sparsity of polynomials.
\end{remark}

As an application of Theorem \ref{sec3-thm2}, we give an example which is a nonnegative polynomial but not a SONC polynomial.
\begin{example}
Let $f=50x^4y^4+x^4+3y^4+800-300xy^2-180x^2y$ which is nonnegative. Let $\A=\{\a_1=(0,0),\a_2=(4,0),\a_3=(0,4),\a_4=(4,4)\}$ and $\b_1=(2,1),\b_2=(1,2)$. There are two simplexes contain $\b_1$: $\Delta_1$ with vertices $\{\a_1,\a_2,\a_3\}$ and $\Delta_2$ with vertices $\{\a_1,\a_2,\a_4\}$. There are two simplexes contain $\b_2$: $\Delta_1$ and $\Delta_3$ with vertices $\{\a_1,\a_3,\a_4\}$. If $f\in\SONC$, then by Theorem \ref{sec3-thm2}, $f$ is a sum of nonnegative circuit polynomials supported on $\Delta_1,\Delta_2,\Delta_1,\Delta_3$ respectively. Let $\b_3=(2,2),\b_4=(2,0),\b_5=(0,2)$. A $\{\a_1,\a_2,\a_3\}$-mediated set containing $\b_1$ is $\{\a_1,\a_2,\a_3,\b_1,\b_3,\b_4\}$. A $\{\a_1,\a_2,\a_4\}$-mediated set containing $\b_1$ is $\{\a_1,\a_2,\a_4,\b_1,\b_3,\b_4\}$. A $\{\a_1,\a_2\,\a_3\}$-mediated set containing $\b_2$ is $\{\a_1,\a_2,\a_3,\b_1,\b_3,\b_5\}$. A $\{\a_1,\a_3,\a_4\}$-mediated set containing $\b_2$ is $\{\a_1,\a_3,\a_4,$ $\b_1,\b_3,\b_5\}$. So by Theorem \ref{sec3-thm1}, $f$ is a sum of squares of binomials. However, in fact $f$ is even not a sum of squares. Thus $f\notin\SONC$.

\begin{center}
\begin{tikzpicture}
\draw (0,0)--(0,2);
\draw (0,0)--(2,0);
\draw (2,0)--(2,2);
\draw (0,2)--(2,2);
\draw (0,0)--(2,2);
\draw (0,2)--(2,0);
%\draw (0,1)--(1,1);
%\draw (1,0)--(1,1);
\fill (0,0) circle (2pt);
\node[below left] (1) at (0,0) {$\a_1$};
\fill (2,0) circle (2pt);
\node[below right] (2) at (2,0) {$\a_2$};
\fill (0,2) circle (2pt);
\node[above left] (3) at (0,2) {$\a_3$};
\fill (2,2) circle (2pt);
\node[above right] (4) at (2,2) {$\a_4$};
\fill (1,0.5) circle (2pt);
\node[right] (5) at (1,0.5) {$\b_1$};
\fill (0.5,1) circle (2pt);
\node[above] (6) at (0.5,1) {$\b_2$};
\fill (1,1) circle (2pt);
\node[right] (7) at (1,1) {$\b_3$};
\fill (1,0) circle (2pt);
\node[below] (8) at (1,0) {$\b_4$};
\fill (0,1) circle (2pt);
\node[left] (9) at (0,1) {$\b_5$};
\end{tikzpicture}
\end{center}

\end{example}

%The condition that there exists a point $\bv\in(\R^*)^n$ such that $d_j\bv^{\b_j}<0$ for all $j$ in Theorem \ref{npmt-thm1} is necessary. We give an example to illustrate this.
%\begin{example}
%Let $d^*=\sup\{d\in\R_+\mid x^6y^6+x^6+y^6+1-x^2y^3-xy^2+dxy^3\textrm{ is nonnegative}\}$ and $f=x^6y^6+x^6+y^6+1-x^2y^3-xy^2+d^*xy^3$. Then $f\notin\SONC$.
%\end{example}
%\begin{proof}
%Suppose $f\in\SONC$ and let $f=\sum_{i=1}^6f_k$, where $f_1=a_1x^6y^6+c_1y^6+d_1-e_1x^2y^3,f_2=b_1x^6+c_2y^6+d_2-e_2x^2y^3,f_3=a_2x^6y^6+c_3y^6+d_3-g_1xy^2,f_4=b_2x^6+c_4y^6+d_4-g_2xy^2,f_5=a_3x^6y^6+c_5y^6+d_5+h_1xy^3,
%f_6=b_3x^6+c_6y^6+d_6+h_2xy^3$ are nonnegative circuit polynomials. Assume that $(x^*,y^*)$ is a zero of $f$. We have $f_k(x^*,y^*)=0$ for $k=1,\ldots,6$. By $f_1(x^*,y^*)=f_2(x^*,y^*)=0$, we have $y^*>0$. By $f_3(x^*,y^*)=f_4(x^*,y^*)=0$, we have $x^*>0$. By $f_5(x^*,y^*)=f_6(x^*,y^*)=0$, we have $x^*y^*>0$. It is a contradictory. Hence $f\notin\SONC$.
%\end{proof}

\section{Computation via Relative Entropy Program}
By virtue of Theorem \ref{sec3-thm2}, we can compute SONC decompositions for nonnegative polynomials via relative entropy programming (REP) more efficiently (\cite{cs,wang}). Unlike the case of SOS decompositions for nonnegative polynomials, no extra support monomials are needed.
\begin{theorem}(\cite[Theorem 3.2]{dlw})\label{crep-thm2}
Let $f=\sum_{i=1}^mc_i\x^{\a_i}-d\x^{\b}\in\R[\x]$ be a circuit polynomial, which is not a sum of monomial squares. Assume $\b=\sum_{i=1}^m\lambda_{i}\x^{\a_i}$, where $\sum_{i=1}^m\lambda_{i}=1, \lambda_{i}>0, i=1,\ldots,m$. Then $f$ is nonnegative if and only if the following REP on variables $\nu_{i}$ and $\delta_{i}$ is feasible:
\begin{equation}
\begin{cases}
\textrm{minimize} \quad\quad 1\\
\nu_{i}=d\lambda_{i}, &\textrm{for } i=1,\ldots,m\\
\nu_{i}\log(\nu_{i}/c_{i})\le \delta_{i}, &\textrm{for } i=1,\ldots,m\\
\sum_{i=1}^m\delta_{i}\le 0,
\end{cases}.
\end{equation}
\end{theorem}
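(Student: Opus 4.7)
The plan is to reduce the nonnegativity of the circuit polynomial $f$, as characterized by Theorem \ref{nc-thm1}, to a single logarithmic inequality, and then to recognize that inequality as exactly the feasibility condition of the stated REP. The entire proof is a logarithmic change of variables wrapped around Theorem \ref{nc-thm1}.

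First I would normalize to $d>0$. Since $f$ is a circuit polynomial that is not a sum of monomial squares, $d\neq 0$. When $\b\notin(2\N)^n$, Theorem \ref{nc-thm1} gives nonnegativity iff $|d|\le\Theta_f$, so the sign of $d$ is irrelevant and we may replace $d$ by $|d|$. When $\b\in(2\N)^n$, the vertices $\a_i$ lie in $(2\N)^n$ as well, so if $d<0$ then $f$ would already be a sum of monomials with positive coefficients and even exponents, contradicting the hypothesis. Thus in either case we may assume $d>0$, and Theorem \ref{nc-thm1} collapses to the scalar inequality $d\le\Theta_f=\prod_{i=1}^m(c_i/\lambda_i)^{\lambda_i}$.

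Next I would take logarithms. Using $\sum_i\lambda_i=1$, the inequality $d\le\Theta_f$ rewrites as $\sum_{i=1}^m\lambda_i\log(d\lambda_i/c_i)\le 0$. Multiplying through by $d>0$ and setting $\nu_i:=d\lambda_i$ transforms this into the relative entropy inequality
\begin{equation*}
\sum_{i=1}^m\nu_i\log(\nu_i/c_i)\le 0.
\end{equation*}
Every step is reversible, so $f$ is nonnegative if and only if there exist $\nu_i=d\lambda_i$ satisfying this inequality.

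Finally I would bring in the epigraph variables $\delta_i$. Feasibility of the stated REP asks for $\delta_i$ with $\nu_i\log(\nu_i/c_i)\le\delta_i$ and $\sum_i\delta_i\le 0$; summing the per-$i$ inequalities gives $\sum_i\nu_i\log(\nu_i/c_i)\le 0$, while conversely setting $\delta_i=\nu_i\log(\nu_i/c_i)$ produces a feasible point whenever this sum is nonpositive. So the REP is feasible iff $d\le\Theta_f$ iff $f$ is nonnegative. The only real obstacle is the sign-and-parity bookkeeping in the first step; once that is absorbed into the normalization $d>0$, the remainder is a routine logarithmic transformation of Theorem \ref{nc-thm1} combined with the standard perspective-cone description of the relative entropy function.
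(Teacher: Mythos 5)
The paper does not prove Theorem~\ref{crep-thm2}; it is quoted from~\cite{dlw} without proof, so there is no internal argument to compare against. Your derivation is the standard and essentially correct one: by Theorem~\ref{nc-thm1} nonnegativity reduces to the circuit-number inequality $d\le\Theta_f$; taking logarithms and using $\sum_i\lambda_i=1$ turns this into $\sum_i\lambda_i\log(d\lambda_i/c_i)\le0$; multiplying by $d>0$ and setting $\nu_i=d\lambda_i$ gives $\sum_i\nu_i\log(\nu_i/c_i)\le0$; and the $\delta_i$ are a routine epigraph reformulation of that scalar inequality.

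One point deserves to be made explicit rather than swept into a "WLOG". The REP as stated literally fixes $\nu_i=d\lambda_i$, and the constraint $\nu_i\log(\nu_i/c_i)\le\delta_i$ forces $\nu_i>0$, hence $d>0$. But when $\b\notin(2\N)^n$ a nonnegative circuit polynomial can have $d<0$ (Theorem~\ref{nc-thm1} only requires $|d|\le\Theta_f$), in which case this REP is infeasible while $f$ is nonnegative. So with the equality $\nu_i=d\lambda_i$ taken at face value the "if and only if" fails for $d<0$; the constraint should read $\nu_i=|d|\lambda_i$. Your sentence "we may replace $d$ by $|d|$" is precisely this fix, but it changes the displayed REP rather than leaving it invariant, so it needs to be stated as a correction to the formulation (or one must restrict to $d>0$, which the hypothesis "not a sum of monomial squares" only guarantees when $\b\in(2\N)^n$). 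Once that is done, the rest of your argument is complete and airtight.
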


We make the following assumption for the rest of this section.\\
{\bf Assumption}: Let $f=\sum_{i=1}^mc_i \x^{\a_i}-\sum_{j=1}^ld_j\x^{\b_j}\in\R[\x]$ with $\Lambda(f)=\{\a_1,\ldots,\a_m\}$ and $\Gamma(f)=\{\b_1,\ldots,\b_l\}$. For every $\b_j$, let
$$\{\Delta_{j1},\ldots,\Delta_{js_{j}}\}:=\{\Delta\mid\Delta\textrm{ is a simplex }, \b_j\in\Delta^{\circ}, V(\Delta)\subseteq\Lambda(f)\}$$
and $I_{jk}:=\{i\in[m]\mid\a_i\in V(\Delta_{jk})\}$ for $k=1,\ldots,s_j$ and $j=1,\ldots,l$. For every $\b_j$ and every $\Delta_{jk}$, since $\b_j\in\Delta_{jk}^{\circ}$, we can write $\b_j=\sum_{i\in I_{jk}}\lambda_{ijk}\a_i$, where $\sum_{i\in I_{jk}}\lambda_{ijk}=1, \lambda_{ijk}>0, i\in I_{jk}$.
\begin{theorem}\label{crep-thm1}
Let $f=\sum_{i=1}^mc_i \x^{\a_i}-\sum_{j=1}^ld_j\x^{\b_j}\in\R[\x]$ with $\Lambda(f)=\{\a_1,\ldots,\a_m\}$, $\Gamma(f)=\{\b_1,\ldots,\b_l\}$ and $V(\New(f))\subseteq\Lambda(f)$, which is not a sum of monomial squares. Then $f\in\SONC$ if and only if the following REP on variables $d_{jk}$, $\nu_{ijk}$, $c_{ijk}$ and $\delta_{ijk}$ is feasible:
\begin{equation}\label{crep-eq1}
\begin{cases}
\textrm{minimize} \quad\quad 1\\
\nu_{ijk}=d_{jk}\lambda_{ijk}, &\textrm{for } i\in I_{jk},k=1,\ldots,s_j,j=1,\ldots,l\\
\nu_{ijk}\log(\nu_{ijk}/c_{ijk})\le \delta_{ijk}, &\textrm{for } i\in I_{jk},k=1,\ldots,s_j,j=1,\ldots,l\\
\sum_{i\in I_{jk}}\delta_{ijk}\le 0, &\textrm{for } k=1\ldots,s_j,j=1,\ldots,l\\
\sum_{j=1}^l\sum_{i\in I_{jk}} c_{ijk}=c_i, &\textrm{for } i=1,\ldots,m\\
\sum_{k=1}^{s_j}d_{jk}=d_j, &\textrm{for } j=1,\ldots,l
\end{cases}.
\end{equation}
\end{theorem}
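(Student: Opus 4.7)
The plan is to view \eqref{crep-eq1} as one copy of the REP of Theorem \ref{crep-thm2} for each pair $(j,k)$ enumerating a candidate simplex $\Delta_{jk}$ whose interior contains $\b_j$, tied together by linear bookkeeping equations that force the simplex-indexed contributions $g_{jk}:=\sum_{i\in I_{jk}}c_{ijk}\x^{\a_i}-d_{jk}\x^{\b_j}$ to reassemble into $f$. The strategy is therefore: use Theorem \ref{sec3-thm2} to reduce $f\in\SONC$ to a decomposition into nonnegative circuit polynomials with the same support, and then translate the nonnegativity of each individual $g_{jk}$ into an REP block via Theorem \ref{crep-thm2}.

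For the forward direction, I would begin by invoking Theorem \ref{sec3-thm2} to write $f=\sum_{j,k}g_{jk}$ with every $g_{jk}$ of the shape displayed above, a nonnegative circuit polynomial on the vertex set $\{\a_i:i\in I_{jk}\}$ with interior exponent $\b_j$. Matching coefficients of $\x^{\a_i}$ and of $\x^{\b_j}$ on the two sides gives exactly the two linear blocks $\sum_{(j,k):\,i\in I_{jk}}c_{ijk}=c_i$ and $\sum_{k}d_{jk}=d_j$ of \eqref{crep-eq1}. Then I would apply Theorem \ref{crep-thm2} to each $g_{jk}$, using the convex combination $\b_j=\sum_{i\in I_{jk}}\lambda_{ijk}\a_i$ fixed in the assumption, to obtain feasible witnesses $\nu_{ijk}=d_{jk}\lambda_{ijk}$ and $\delta_{ijk}$ satisfying the first three constraint blocks. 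Together these yield a feasible solution.

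For the converse, given any feasible solution of \eqref{crep-eq1}, I would define $g_{jk}$ as above. The constraint $\nu_{ijk}=d_{jk}\lambda_{ijk}$, the relative entropy inequality $\nu_{ijk}\log(\nu_{ijk}/c_{ijk})\le\delta_{ijk}$, and $\sum_{i\in I_{jk}}\delta_{ijk}\le 0$ together form exactly the sufficient condition of Theorem \ref{crep-thm2} for $g_{jk}$ to be a nonnegative circuit polynomial supported on $\Delta_{jk}$ with interior point $\b_j$. The remaining two linear blocks $\sum c_{ijk}=c_i$ and $\sum_k d_{jk}=d_j$ then identify $\sum_{j,k}g_{jk}$ coefficient by coefficient with $f$, so $f$ is exhibited as a sum of nonnegative circuit polynomials, i.e.\ $f\in\SONC$.

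The main obstacle I foresee lies in aligning the indexing: Theorem \ref{sec3-thm2} produces \emph{some} same-support decomposition, but one must argue that the circuit polynomials in that decomposition can be organised precisely into the enumeration $\{\Delta_{jk}:k=1,\ldots,s_j\}$ of the assumption, with the coefficients $c_{ijk}$ or $d_{jk}$ allowed to vanish on the simplices that do not actually appear. A related technical point is the standard closure convention for the relative entropy function at zero, namely $0\log(0/c)=0$ for $c\ge 0$, which keeps \eqref{crep-eq1} well-posed on pairs $(j,k)$ with $d_{jk}=0$ and hence no contribution. Once these bookkeeping issues are settled, the equivalence between REP feasibility and SONC membership follows block by block from Theorem \ref{crep-thm2}.
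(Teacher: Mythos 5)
Your proof is correct and follows essentially the same route as the paper: use Theorem \ref{sec3-thm2} to reduce SONC membership to a same-support decomposition indexed by the $\Delta_{jk}$, then apply Theorem \ref{crep-thm2} block by block together with the coefficient-matching linear constraints. In fact you are slightly more explicit than the paper, which implicitly relies on Theorem \ref{sec3-thm2} in the forward direction by simply assuming that a decomposition $f=\sum_{j,k}f_{jk}$ with that support structure exists; your remarks about padding with zero-coefficient blocks and the closure convention $0\log 0=0$ are reasonable bookkeeping points that the paper leaves tacit.
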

\begin{proof}
Suppose $f_{jk}=\sum_{i\in I_{jk}}c_{ijk}\x^{\a_i}-d_{jk}\x^{\b_j}$ is a nonnegative circuit polynomial for $k=1,\ldots,s_j,j=1,\ldots,l$ and $f=\sum_{j=1}^l\sum_{k=1}^{r}f_{jk}$. Then by Theorem \ref{crep-thm2}, $(d_{jk})_{j,k}$, $(\nu_{ijk})_{i,j,k}=(d_{jk}\lambda_{ijk})_{i,j,k}$, $(c_{ijk})_{i,j,k}$ and $(\delta_{ijk})_{i,j,k}=(\nu_{ijk}\log(\nu_{ijk}/c_{ijk}))_{i,j,k}$ is a feasible solution of (\ref{crep-eq1}).

Conversely, suppose that $(d_{jk})_{j,k}$, $(\nu_{ijk})_{i,j,k}$, $(c_{ijk})_{i,j,k}$ and $(\delta_{ijk})_{i,j,k}$ is a feasible solution of (\ref{crep-eq1}). Let $f_{jk}=\sum_{i\in I_{jk}}c_{ijk}\x^{\a_i}-d_{jk}\x^{\b_j}$ for $k=1,\ldots,s_j,j=1,\ldots,l$. Then by Theorem \ref{crep-thm2}, $f_{jk}$ is a nonnegative circuit polynomial for all $j,k$. Moreover, by the last two equality conditions in (\ref{crep-eq1}), we have $f=\sum_{j=1}^l\sum_{k=1}^{r}f_{jk}$. Thus, $f\in\SONC$.
\end{proof}

\bibliographystyle{amsplain}

\end{document}